\documentclass[12pt]{article}%
\usepackage{amsmath}
\usepackage{amsfonts}
\usepackage{amssymb}
\usepackage{graphicx}%
\setcounter{MaxMatrixCols}{30}
\providecommand{\U}[1]{\protect\rule{.1in}{.1in}}
\newtheorem{theorem}{Theorem}

\newtheorem{algorithm}[theorem]{Algorithm}

\newtheorem{corollary}[theorem]{Corollary}

\newtheorem{lemma}[theorem]{Lemma}

\newenvironment{proof}[1][Proof]{\textbf{#1.} }{\ \rule{0.5em}{0.5em}}

\date{}

\begin{document}

\title{Simultaneous Unitary Equivalences\thanks{Linear Algebra Appl. (2011), doi:10.1016/j.laa.2011.09.031}}
\author{Tatiana G. Gerasimova\thanks{Faculty of Mechanics and Mathematics, Kiev
National Taras Shevchenko University,
Volodymyrska 64, Kiev, Ukraine.
\texttt{gerasimova@imath.kiev.ua}},
Roger A. Horn\thanks{Mathematics
Department, University of Utah, Salt
Lake City, Utah USA.
\texttt{rhorn@math.utah.edu}}, and
\\ Vladimir V.
Sergeichuk\thanks{Institute of
Mathematics, Tereshchenkivska 3, Kiev,
Ukraine. Supported in part by Grant
0107U002333 from the National Academy
of Sciences of Ukraine.
\texttt{sergeich@imath.kiev.ua}}}
\maketitle
\date{\it Dedicated with respect and appreciation to Avi Berman, Moshe Goldberg, and
Raphael Loewy on the occasion of their retirement from The Technion.}

\begin{abstract}
Let $\mathcal{S}_{1}$, $\mathcal{S}_{2}$, $\mathcal{S}_{3}$, $\mathcal{S}_{4}$
be given finite sets of pairs of $n$-by-$n$ complex matrices. We describe an
algorithm to determine, with finitely many computations, whether there is a
single unitary matrix $U$ such that each pair of matrices in $\mathcal{S}_{1}$
is unitarily similar via $U$, each pair of matrices in $\mathcal{S}_{2}$ is
unitarily congruent via $U$, each pair of matrices in $\mathcal{S}_{3}$ is
unitarily similar via $\bar{U}$, and each pair of matrices in $\mathcal{S}%
_{4}$ is unitarily congruent via $\bar{U}$.
\medskip

{\it Keywords:} simultaneous unitary
similarity, simultaneous unitary
congruence, unitary congruence,
Specht's theorem.
\medskip

{\it AMS classification:} 15A21, 15A27
\end{abstract}

\section{Introduction}

Our goal is to solve the following

\medskip%
\noindent
\textbf{General Problem.} Let $\mathcal{S}_{1}$, $\mathcal{S}_{2}$,
$\mathcal{S}_{3}$, $\mathcal{S}_{4}$ be given finite sets of pairs of
$n$-by-$n$ complex matrices. Describe an algorithm to determine, with finitely
many computations, whether there is a single unitary matrix $U$ such that each
pair of matrices in $\mathcal{S}_{1}$ is unitarily similar via $U$, each pair
of matrices in $\mathcal{S}_{2}$ is unitarily congruent via $U$, each pair of
matrices in $\mathcal{S}_{3}$ is unitarily similar via $\bar{U}$, and each
pair of matrices in $\mathcal{S}_{4}$ is unitarily congruent via $\bar{U}$.

\medskip This General Problem includes as special cases the problem of
determining whether finitely many pairs of matrices are simultaneously
unitarily similar ($\mathcal{S}_{2}=\mathcal{S}_{3}=\mathcal{S}_{4}%
=\varnothing$) as well as the problem of determining whether finitely many
pairs of matrices are simultaneously unitarily congruent ($\mathcal{S}%
_{1}=\mathcal{S}_{3}=\mathcal{S}_{4}=\varnothing$).

All of the matrices that we consider are complex and square. Two matrices $A$
and $B$ of the same size are \emph{unitarily similar} if there is a unitary
matrix $U$ such that $A=UBU^{\ast}$; they are \emph{unitarily congruent} if
there is a unitary matrix $U$ such that $A=UBU^{T}$. Given pairs of $n$-by-$n$
matrices $(A_{1},B_{1}),\ldots,(A_{m},B_{m})$ are \emph{simultaneously
unitarily similar} if there is a unitary matrix $U$ such that $A_{j}%
=UB_{j}U^{\ast}$ for each
$j=1,\ldots,m$; they are
\emph{simultaneously unitarily
congruent }if there is a unitary
matrix\emph{ }$U$ such that
$A_{j}=UB_{j}U^{T}$ for each
$j=1,\ldots,m$. The trace of a matrix
$A$ is denoted by $\operatorname{tr}A$.
We adopt the notation and terminology
of \cite{HJ1}.

\section{Unitary similarity of a pair of matrices}

Any finite formal product of nonnegative powers of two noncommuting variables
$s$, $t$
\[
W(s,t)=s^{m_{1}}t^{n_{1}}s^{m_{2}}t^{n_{2}}\cdots s^{m_{k}}t^{n_{k}},\quad
m_{1},n_{1},\ldots,m_{k},n_{k}\geq0
\]
is a \emph{word }in\emph{ }$s$ and $t$. The sum $m_{1}+n_{1}+m_{2}%
+n_{2}+\cdots+m_{k}+n_{k}$ is the \emph{length} of the word ${W}(s,t)$, and
the nonnegative integers $m_{i}$ and $n_{i}$ are its \emph{factor exponents}.
A \emph{word in }$A$\emph{ and }$A^{\ast}$ is%

\begin{equation}
W(A,A^{\ast})=A^{m_{1}}(A^{\ast})^{n_{1}}A^{m_{2}}(A^{\ast})^{n_{2}}\cdots
A^{m_{k}}(A^{\ast})^{n_{k}} \label{word}%
\end{equation}

If $A=UBU^{\ast}$ for some unitary $U\in M_{n}$, a calculation reveals that
$W(A,A^{\ast})=UW(B,B^{\ast})U^{\ast}$, so $W(A,A^{\ast})$ is unitarily
similar to $W(B,B^{\ast})$. Thus, unitary similarity of $A$ and $B$ implies
that
\begin{equation}
\operatorname{tr}W(A,A^{\ast})=\mathrm{\operatorname{tr}}W(B,B^{\ast})
\label{trace}%
\end{equation}
for every word $W(s,t)$ in two noncommuting variables.

A theorem of W. Specht \cite{Specht}
provides a converse for this
implication. Various authors have
provided bounds to show that only
finitely many words need to be
considered \cite{P}; the bound in the
following theorem is due to Pappacena.
\cite{Pappacena}

\begin{theorem}
\label{Specht}Let complex $n$-by-$n$ matrices $A$ and $B$ be given. The
following are equivalent:
\newline
(a) $A$ and $B$ are unitarily similar;
\newline
(b) $\operatorname{tr}W(A,A^{\ast})=\operatorname{tr}W(B,B^{\ast})$ for every
word ${W}(s,t)$ in two noncommuting variables;
\newline
(c) $\operatorname{tr}W(A,A^{\ast})=\operatorname{tr}W(B,B^{\ast})$ for every
word ${W}(s,t)$ in two noncommuting variables whose length is at most%
\[
n\sqrt{\frac{2n^{2}}{n-1}+\frac{1}{4}}+\frac{n}{2}-2\text{.}%
\]

\end{theorem}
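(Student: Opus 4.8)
The logical skeleton is $\mathrm{(a)}\Rightarrow\mathrm{(b)}$, $\mathrm{(b)}\Leftrightarrow\mathrm{(c)}$, and $\mathrm{(b)}\Rightarrow\mathrm{(a)}$. The implication $\mathrm{(a)}\Rightarrow\mathrm{(b)}$ is the one-line calculation recalled just above the theorem, and $\mathrm{(b)}\Rightarrow\mathrm{(c)}$ is trivial because (c) demands fewer identities. What remains is Specht's converse $\mathrm{(b)}\Rightarrow\mathrm{(a)}$ together with the quantitative reduction $\mathrm{(c)}\Rightarrow\mathrm{(b)}$, which I would treat separately.

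For $\mathrm{(b)}\Rightarrow\mathrm{(a)}$ I would pass to $M_{2n}$ and the block-diagonal matrix $C=A\oplus B$. Let $\mathcal C\subseteq M_{2n}$ be the unital $\ast$-algebra generated by $C$ and $C^{\ast}=A^{\ast}\oplus B^{\ast}$; since $W(C,C^{\ast})=W(A,A^{\ast})\oplus W(B,B^{\ast})$ for every word $W(s,t)$, the algebra $\mathcal C$ consists precisely of the block-diagonal matrices $X\oplus Y$ in which $X=\sum_{i}c_{i}W_{i}(A,A^{\ast})$ and $Y=\sum_{i}c_{i}W_{i}(B,B^{\ast})$ arise from one and the same linear combination of words. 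The two coordinate projections $\pi_{1},\pi_{2}\colon\mathcal C\to M_{n}$ are then unital $\ast$-homomorphisms --- that is, $\ast$-representations of the finite-dimensional $\ast$-algebra $\mathcal C$ on $\mathbb{C}^{n}$ --- with $\pi_{1}(C)=A$ and $\pi_{2}(C)=B$. Their characters are $Z\mapsto\operatorname{tr}\pi_{j}(Z)$; on the spanning words $W(C,C^{\ast})$ these equal $\operatorname{tr}W(A,A^{\ast})$ and $\operatorname{tr}W(B,B^{\ast})$ respectively, so hypothesis (b) says exactly that $\pi_{1}$ and $\pi_{2}$ have the same character. Now two finite-dimensional $\ast$-representations of a finite-dimensional $\ast$-algebra with equal character are unitarily equivalent: by the Wedderburn decomposition the character records the multiplicity of each simple summand, and Schur's lemma then furnishes an intertwining unitary. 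Hence there is a unitary $U\in M_{n}$ with $U\pi_{1}(Z)U^{\ast}=\pi_{2}(Z)$ for every $Z\in\mathcal C$; taking $Z=C$ gives $UAU^{\ast}=B$, i.e.\ $A$ and $B$ are unitarily similar.

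For $\mathrm{(c)}\Rightarrow\mathrm{(b)}$, note that in the argument above hypothesis (b) was used only to make the characters of $\pi_{1}$ and $\pi_{2}$ agree, and for that it is enough that they agree on a spanning set of $\mathcal C$ --- i.e.\ on all words $W(C,C^{\ast})$ of length at most some $L=L(n)$ past which the ascending chain $V_{d}=\operatorname{span}\{W(C,C^{\ast}):\deg W\le d\}$ has stabilized. Because $W(C,C^{\ast})=W(A,A^{\ast})\oplus W(B,B^{\ast})$, this chain is the block-diagonal sum of the corresponding chains for $A$ alone and for $B$ alone, so it stabilizes as soon as both of those do, hence by the length of the algebra generated by a single pair of $n$-by-$n$ matrices. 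Pappacena's theorem on the length of a matrix algebra bounds that length by the quantity displayed in (c); this explicit bound is the single genuinely hard, purely quantitative ingredient, and I would invoke \cite{Pappacena} for it rather than reprove it. With that in hand, trace agreement on all words of length at most the stated bound propagates to all words whatsoever, which is (b).

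Thus the representation-theoretic heart --- the coordinate projections of $\mathcal C$, the reading of (b) as equality of characters, and the recovery of an intertwining unitary from equal multiplicities via Schur's lemma --- is soft and essentially self-contained. The only real difficulty, and what makes the theorem quantitatively useful, is the explicit length bound needed for $\mathrm{(c)}\Rightarrow\mathrm{(b)}$; that is the step I expect to be the main obstacle.
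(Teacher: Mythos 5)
Your proposal is more than the paper offers: the paper gives no proof of this theorem at all, attributing the equivalence of (a) and (b) to Specht and the length bound in (c) to Pappacena, with citations only. Your argument for $\mathrm{(b)}\Rightarrow\mathrm{(a)}$ via $C=A\oplus B$, the coordinate projections of the $\ast$-algebra $\mathcal{C}$ generated by $C,C^{\ast}$, and the fact that finite-dimensional $\ast$-representations with equal trace functionals are unitarily equivalent, is the standard argument and is sound as a sketch ($\mathcal{C}$ is a $\ast$-closed subalgebra of $M_{2n}$, hence semisimple, and equal characters force equal multiplicities).

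The gap is in $\mathrm{(c)}\Rightarrow\mathrm{(b)}$, at the sentence claiming that the chain $V_{d}=\operatorname{span}\{W(C,C^{\ast}):\deg W\le d\}$ ``is the block-diagonal sum of the corresponding chains for $A$ alone and for $B$ alone, so it stabilizes as soon as both of those do.'' That is false: $V_{d}$ is in general a \emph{proper} subspace of $V_{d}^{A}\oplus V_{d}^{B}$ (it records only those pairs obtainable from one and the same linear combination of words), and stabilization of the two component chains does not force stabilization of the coupled chain. Already for scalars $A=[1]$, $B=[-1]$ both component chains are constant from degree $0$ on, while $V_{0}=\operatorname{span}\{(1,1)\}\subsetneq V_{1}=\mathbb{C}\oplus\mathbb{C}$. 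So the index at which $V_{d}$ stabilizes is the length of the coupled algebra $\mathcal{C}\subseteq M_{2n}$, not the maximum of the lengths of the algebras generated by $\{A,A^{\ast}\}$ and $\{B,B^{\ast}\}$ in $M_{n}$. Repairing your argument in the obvious way --- applying Pappacena's bound to $\mathcal{C}$, whose dimension is at most $2n^{2}$ and whose elements can have minimal polynomials of degree up to $2n$ --- yields only a bound roughly twice the one displayed in (c) (the displayed quantity is Pappacena's bound with parameters $d=n^{2}$, $m=n$). Hence your route does not establish (c) with the stated length bound; to get that constant one must use the precise quantitative statement in Pappacena's paper (or a sharper argument than the coupled-chain reduction), which is exactly what the paper does by citation. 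Everything else in your write-up (the trivial implications, the propagation of trace equality once one has a spanning set of $\mathcal{C}$ by words of bounded length) is fine.
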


\section{A basic lemma}

The key to obtaining our criteria for simultaneous unitary similarity and
simultaneous unitary congruence is understanding the consequences of certain
intertwining relations involving a special block matrix.

\begin{lemma}
\label{basic} Consider the complex $k$-by-$k$ block matrices%
\begin{equation}
A=\left[
\begin{array}
[c]{ccccc}%
0 & I_{n} & A_{1,3} & \cdots & A_{1,k}\\
& 0 & I_{n} & \ddots & \vdots\\
&  & 0 & \ddots & A_{k-2,k}\\
&  &  & \ddots & I_{n}\\
&  &  &  & 0
\end{array}
\right]  \label{Ma}%
\end{equation}
and%
\begin{equation}
B=\left[
\begin{array}
[c]{ccccc}%
0 & I_{n} & B_{1,3} & \cdots & B_{1,k}\\
& 0 & I_{n} & \ddots & \vdots\\
&  & 0 & \ddots & B_{k-2,k}\\
&  &  & \ddots & I_{n}\\
&  &  &  & 0
\end{array}
\right]  \label{Mb}%
\end{equation}
in which every block is $n$-by-$n$. Define $A_{i,i+1}=B_{i,i+1}=I_{n}$ for all
$i=1,\ldots,k-1$ and $A_{ij}=B_{ij}=0$ whenever $i\geq j$. Let $W=[W_{ij}%
]_{i,j=1}^{k}$ be an $nk$-by-$nk$ matrix that is partitioned conformally to
$A$ and $B$.
\newline
(a) Suppose that $AW=WB$. Then $W$ is block upper triangular and
$W_{11}=W_{22}=\cdots=W_{kk}$.
\newline
(b) Suppose that $W$ is unitary and $AW=WB$, that is, $A$ and $B$ are
unitarily similar and $A=WBW^{\ast}$. Then $W_{11}=U$ is unitary and
$W=U\oplus U\oplus\cdots\oplus U$ is block diagonal. Moreover, $A_{ij}%
=UB_{ij}U^{\ast}$ for all $i$ and $j$.
\newline
(c) Suppose that $A\bar{W}=WB$. Then $W$ is block upper triangular,
$W_{ii}=W_{11}$ if $i$ is odd, and $W_{ii}=\overline{W_{11}}$ if $i$ is even.
\newline
(d) Suppose that $W$ is unitary and $A\bar{W}=WB$, that is, $A$ is unitarily
congruent to $B$ and $A=WBW^{T}$. Then $W_{11}=U$ is unitary, $W_{ii}=U$ if
$i$ is odd, $W_{ii}=\bar{U}$ if $i$ is even, and $W=U\oplus\bar{U}\oplus
U\oplus\cdots$ is block diagonal. Moreover,
\newline
\indent
\noindent(d1) $A_{ij}=UB_{ij}U^{\ast}$ if $i$ is odd and $j$ is even;
\newline
\indent
\noindent(d2) $A_{ij}=UB_{ij}U^{T}$ if $i$ and $j$ are both odd;
\newline
\indent
\noindent(d3) $A_{ij}=\bar{U}B_{ij}U^{\ast}$ if $i$ and $j$ are both even;
and
\newline
\indent
\noindent(d4) $A_{ij}=\bar{U}B_{ij}U^{T}$ if $i$ is even and $j$ is odd.
\end{lemma}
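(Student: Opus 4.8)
The plan is to prove parts (a) and (b) first, and then derive parts (c) and (d) either by the same bare-hands block computation or by reducing to (a) and (b). Throughout, the crucial structural fact about $A$ and $B$ is that they are nilpotent with a single Jordan block of size $k$ in their "coarsest" block structure, with identity matrices on the first superdiagonal; the entries strictly above that superdiagonal are the only freedom.

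For (a), I would write out the matrix equation $AW=WB$ blockwise. The $(i,j)$ block of $AW$ is $\sum_{\ell} A_{i\ell}W_{\ell j}$, which, because $A_{i\ell}=0$ unless $\ell>i$ and $A_{i,i+1}=I_n$, equals $W_{i+1,j}+\sum_{\ell>i+1}A_{i\ell}W_{\ell j}$ (with the convention that $W_{k+1,j}=0$). Similarly the $(i,j)$ block of $WB$ is $\sum_\ell W_{i\ell}B_{\ell j}=W_{i,j-1}+\sum_{\ell<j-1}W_{i\ell}B_{\ell j}$. The strategy is an induction that peels off information proceeding from the bottom-left corner. Looking at block row $k$ (where $A_{k\ell}=0$ for all $\ell$) gives $0=W_{k,j-1}+(\text{lower-index terms})$; starting from $j=2$ and moving right we get $W_{k,1}=0$, then $W_{k,2}=0$, and so on, so the last block row of $W$ is zero except possibly $W_{kk}$. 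Then one moves up one block row and repeats, at each stage using the already-established vanishing of blocks below and to the left, to conclude that $W$ is block upper triangular. Comparing the $(i,i)$ blocks then gives $W_{i+1,i+1}=W_{ii}$ for each $i$ (the cross terms all involve already-known-zero sub-diagonal blocks), hence $W_{11}=\cdots=W_{kk}$.

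For (b), once (a) is in hand, $W$ is block upper triangular and unitary; a block upper triangular unitary matrix is block diagonal (compare $W W^\ast=I$ entrywise, or recall that the columns are orthonormal), so $W=U\oplus\cdots\oplus U$ with $U:=W_{11}$ unitary. Substituting this block-diagonal $W$ back into $AW=WB$ and reading off the $(i,j)$ block gives $A_{ij}U=UB_{ij}$, i.e. $A_{ij}=UB_{ij}U^\ast$, for all $i,j$. For (c) and (d) the natural move is to mimic this: write out $A\bar W=WB$ blockwise. The $(i,j)$ block of $A\bar W$ is $\overline{W_{i+1,j}}+\sum_{\ell>i+1}A_{i\ell}\overline{W_{\ell j}}$, and the same bottom-up induction as in (a) forces $W$ block upper triangular; comparing $(i,i)$ blocks now yields $\overline{W_{i+1,i+1}}=W_{ii}$, which unwinds to $W_{ii}=W_{11}$ for odd $i$ and $W_{ii}=\overline{W_{11}}$ for even $i$, giving (c). For (d), block-triangular plus unitary again forces block diagonal, so $W=U\oplus\bar U\oplus U\oplus\cdots$ with $U$ unitary, and substituting into $A\bar W=WB$ and reading off the $(i,j)$ block gives $A_{ij}\overline{W_{jj}}=W_{ii}B_{ij}$; the four parity cases for $(i,j)$ produce exactly the four formulas (d1)--(d4), since $\overline{W_{jj}}$ is $\bar U$ or $U=(\bar U)\,{}$ according to the parity of $j$ and likewise for $W_{ii}$.

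The main obstacle is purely bookkeeping: organizing the induction in (a) so that at each step the "extra" terms $\sum_{\ell>i+1}A_{i\ell}W_{\ell j}$ (respectively $\sum_{\ell<j-1}W_{i\ell}B_{\ell j}$) are already known to vanish, and being careful about the direction of the induction (bottom row to top, and within a row left to right, or some order that makes every newly-encountered block the unique unknown in its equation). One must also handle the boundary conventions $W_{0,j}=W_{i,k+1}=0$ cleanly. Nothing deep is required beyond the observation that block-triangular unitary matrices are block diagonal; the complex-conjugate versions (c),(d) introduce no new idea, only the parity alternation coming from iterating $\overline{W_{i+1,i+1}}=W_{ii}$.
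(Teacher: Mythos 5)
Your proposal is correct and takes essentially the same route as the paper: the bottom-up, left-to-right blockwise comparison in $AW=WB$ (resp.\ $A\bar W=WB$) to force block upper triangularity, the superdiagonal comparison to equate (or conjugate-link) the diagonal blocks, the fact that a block upper triangular unitary matrix is block diagonal, and a final substitution to read off the relations in (b) and (d1)--(d4). One small slip: the relation $W_{i+1,i+1}=W_{ii}$ (and its conjugate analogue $\overline{W_{i+1,i+1}}=W_{ii}$ in (c)) comes from comparing the $(i,i+1)$ blocks, not the $(i,i)$ blocks, which give only $0=0$ once triangularity is known; since the identity you state is exactly the superdiagonal one, this is a labeling error and does not affect the argument.
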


\begin{proof}
A computation verifies the assertions in (a) and (c) about block
triangularity: compare blocks in the respective identities $AW=WB$ and
$A\bar{W}=WB$, starting in block position ($k,1$). Work to the right until
reaching block position ($k,k-1$). Move up to block position ($k-1,1$) and
work to the right until reaching block position ($k-1,k-2$). Repeat this
process, moving up one block row at a time, until reaching block position
($2,1$).

The assertions in (a) and (c) about the main diagonal blocks of $W$ follow
(once one knows that $W$ is block upper triangular) from comparing blocks in
the respective identities in positions ($1,2$),\ldots,($k-1,k$).

The assertions in (b) and (d) about $W$ reflect the facts that a block
triangular unitary matrix is block diagonal, and the direct summands in a
unitary direct sum are unitary. The asserted relationships between $A_{ij}$
and $B_{ij}$ follow (once one knows that $W$ is block diagonal) from the
respective identities $A_{ij}W_{jj}=W_{ii}A_{ij}$ and $A_{ij}\overline{W_{jj}%
}=W_{ii}A_{ij}$.\hfill
\end{proof}

\medskip In summary, if the matrices $A$ and $B$ in (\ref{Ma}) and (\ref{Mb})
are unitarily similar then all of the pairs $(A_{ij},B_{ij})$ are
simultaneously unitarily similar. If $A$ and $B$ are unitarily congruent, then
there is a single unitary matrix $U$ involved in four types of unitary
equivalence: certain pairs $(A_{ij},B_{ij})$ are simultaneously unitarily
similar via $U$ or $\bar{U}$, and certain pairs are simultaneously unitarily
congruent via $U$ or $\bar{U}$.

\section{Simultaneous unitary similarity}

It is useful to have an explicit statement of the criterion for simultaneous
unitary similarity that is implicit in Lemma \ref{basic}.

\begin{theorem}
\label{simul unitary}Let pairs $(A_{1},B_{1}),\ldots,(A_{m},B_{m})$ of
$n$-by-$n$ complex matrices be given. Choose $k$ large enough so that the
matrix $A$ in (\ref{Ma}) has at least $m$ blocks above the second block
superdiagonal. Place the matrices $A_{1},\ldots,A_{m}$ in those blocks in any
order, and place zero matrices in any unfilled blocks. Place $B_{1}%
,\ldots,B_{m}$ and zero matrices in corresponding blocks of the matrix $B$ in
(\ref{Mb}). Then $A$ is unitarily similar to $B$, if and only if the pairs
$(A_{1},B_{1}),\ldots,(A_{m},B_{m})$ are simultaneously unitarily similar.
\end{theorem}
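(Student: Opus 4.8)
The plan is to prove the two directions of the equivalence separately, with the forward direction being an immediate consequence of Lemma \ref{basic}(b) and the reverse direction a direct verification.

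First I would handle the ``only if'' direction. Suppose $A$ is unitarily similar to $B$, so $A = WBW^{\ast}$ for some unitary $W$, i.e. $AW = WB$. Since $A$ and $B$ have the form (\ref{Ma}) and (\ref{Mb}) with identity blocks on the first superdiagonal (the construction places the given matrices and zero blocks only \emph{above} the second superdiagonal, so the first-superdiagonal blocks are indeed all $I_n$), Lemma \ref{basic}(b) applies verbatim: $W = U \oplus \cdots \oplus U$ is block diagonal with $U := W_{11}$ unitary, and $A_{ij} = U B_{ij} U^{\ast}$ for every $i$ and $j$. In particular this holds for each of the (finitely many) block positions into which $A_1, \ldots, A_m$ and $B_1, \ldots, B_m$ were placed, so $A_\ell = U B_\ell U^{\ast}$ for $\ell = 1, \ldots, m$ with the single unitary $U$. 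Hence the pairs are simultaneously unitarily similar.

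For the ``if'' direction, suppose there is a unitary $U$ with $A_\ell = U B_\ell U^{\ast}$ for all $\ell$. Let $W = U \oplus \cdots \oplus U$ ($k$ copies), which is unitary. I would then check $AW = WB$ by comparing blocks: the $(i,j)$ block of $AW$ is $A_{ij} U$ and the $(i,j)$ block of $WB$ is $U B_{ij}$, and these agree for every $i,j$ because each $A_{ij}$ is either $0$, or $I_n$ (on the first superdiagonal, where $A_{ij}U = U = U B_{ij}$ since $U$ commutes with $I_n$), or one of the $A_\ell$ (where $A_{ij}U = U B_{ij} U^{\ast} U = U B_{ij}$). Therefore $A = WBW^{\ast}$ with $W$ unitary, so $A$ and $B$ are unitarily similar.

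I do not anticipate a genuine obstacle here; the content has been isolated into Lemma \ref{basic}. The only points requiring a little care are bookkeeping ones: confirming that the placement prescription (fill blocks strictly above the second superdiagonal, identities on the first superdiagonal, zeros elsewhere on or below the diagonal) really does produce matrices of exactly the shape (\ref{Ma})--(\ref{Mb}) to which Lemma \ref{basic} applies, and noting that $k$ can always be chosen large enough since the number of block positions above the second superdiagonal grows without bound as $k \to \infty$ (for $k$-by-$k$ block matrices there are $\binom{k-2}{2}$ such positions). With those observations in place, the two directions above complete the proof.
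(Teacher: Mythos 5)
Your proof is correct and follows exactly the route the paper intends: the theorem is stated as the explicit form of what is ``implicit in Lemma \ref{basic},'' with the forward direction given by Lemma \ref{basic}(b) and the converse by the direct check that $W=U\oplus\cdots\oplus U$ intertwines $A$ and $B$. The bookkeeping observations (the constructed matrices have exactly the form (\ref{Ma})--(\ref{Mb}), and $k$ can always be taken large enough) are the only other ingredients, and you have them.
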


For example, one could choose $k=m+2$ and place the respective matrices of the
pairs $(A_{1},B_{1}),\ldots,(A_{m},B_{m})$ in positions $(1,3),\ldots,(1,k)$
of the first block rows of (\ref{Ma}) and (\ref{Mb}), or in the blocks
$(1,3),(2,4),\ldots,(k-2,k)$ of the third block superdiagonal of (\ref{Ma})
and (\ref{Mb}).

A natural extension of Specht's criterion to more than a single pair of
matrices follows from the preceding theorem.

\begin{corollary}
\label{genSpecht}Given pairs $(A_{1},B_{1}),\ldots,(A_{m},B_{m})$ of
$n$-by-$n$ complex matrices are simultaneously unitarily similar if and only
if $\operatorname{tr}w(A_{1},A_{1}^{\ast},\ldots,A_{m},A_{m}^{\ast
})=\operatorname{tr}w(B_{1},B_{1}^{\ast},\ldots,B_{m},B_{m}^{\ast})$ for all
words $w(s_{1},t_{1},\ldots,s_{m},t_{m})$ in $2m$ noncommuting variables.
\end{corollary}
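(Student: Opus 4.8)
The plan is to deduce Corollary~\ref{genSpecht} by combining Theorem~\ref{simul unitary} with Specht's theorem (Theorem~\ref{Specht}) applied to the pair of large block matrices $A$ and $B$ constructed there. First I would invoke Theorem~\ref{simul unitary}: choosing $k$ large enough and placing $A_1,\dots,A_m$ (respectively $B_1,\dots,B_m$) into designated blocks of the block matrices $A$ in \eqref{Ma} and $B$ in \eqref{Mb}, with zeros elsewhere, the pairs $(A_j,B_j)$ are simultaneously unitarily similar if and only if $A$ and $B$ are unitarily similar. By Theorem~\ref{Specht}, the latter holds if and only if $\operatorname{tr}W(A,A^{\ast})=\operatorname{tr}W(B,B^{\ast})$ for every word $W(s,t)$ in two noncommuting variables.

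The remaining task is purely computational: to rewrite $\operatorname{tr}W(A,A^{\ast})$ in terms of the blocks $A_j$ and $A_j^\ast$. I would expand $W(A,A^{\ast})$ as a sum of block-matrix products; because $A$ and $A^\ast$ are block matrices whose nonzero blocks are among $I_n$, the $A_j$, and their conjugate transposes $A_j^\ast$, each entry of $W(A,A^{\ast})$ is a finite sum of products of the $A_j$ and $A_j^\ast$, i.e. a sum of evaluations $w(A_1,A_1^{\ast},\dots,A_m,A_m^{\ast})$ of words $w$ in $2m$ noncommuting variables. Taking the trace and summing over the diagonal blocks shows that $\operatorname{tr}W(A,A^{\ast})$ is a $\mathbb{Z}_{\ge 0}$-linear combination of traces $\operatorname{tr}\,w(A_1,A_1^{\ast},\dots,A_m,A_m^{\ast})$, with the same combination (same words, same coefficients) when every $A_j$ is replaced by $B_j$, since the substitution is formal in the block structure. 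Hence if $\operatorname{tr}\,w(A_1,\dots,A_m^{\ast})=\operatorname{tr}\,w(B_1,\dots,B_m^{\ast})$ for all words $w$ in $2m$ variables, then $\operatorname{tr}W(A,A^{\ast})=\operatorname{tr}W(B,B^{\ast})$ for all words $W$ in two variables, which by the chain above gives simultaneous unitary similarity.

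For the converse direction I would argue that every word $w(s_1,t_1,\dots,s_m,t_m)$ arises this way: given a word $w$, one can choose the placement of the $A_j$'s in the block matrix and a suitable word $W(s,t)$ so that one diagonal block of $W(A,A^{\ast})$ equals $w(A_1,A_1^{\ast},\dots,A_m,A_m^{\ast})$ and the other diagonal contributions are controlled (for instance, by placing the $A_j$ along the third block superdiagonal and tracking how powers of $A$ move mass along superdiagonals, each monomial in the $A_j$'s of a given shape is realized as a specific block entry). Alternatively, and more cleanly, simultaneous unitary similarity of the $(A_j,B_j)$ trivially forces $\operatorname{tr}\,w(A_1,\dots)=\operatorname{tr}\,w(B_1,\dots)$ for every word $w$, since conjugating every $A_j$ by the same unitary $U$ conjugates $w(A_1,A_1^{\ast},\dots)$ by $U$ and trace is similarity-invariant; this is the easy half and needs no block machinery at all. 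So the real content, and the one direction requiring Theorem~\ref{simul unitary} plus Theorem~\ref{Specht}, is the sufficiency of the trace conditions.

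The main obstacle I anticipate is the bookkeeping in the expansion of $\operatorname{tr}W(A,A^{\ast})$: one must check carefully that no cancellation or sign issue spoils the claim that the coefficients are the same for $A$ and for $B$, and that the set of words $w$ that appear is rich enough (in the converse-free argument above this is moot, but if one wants a self-contained combinatorial proof of both directions one needs surjectivity onto all words $w$). Since the nonzero blocks of $A$ and $B$ in positions off the identity superdiagonal are exactly the $A_j$ and $B_j$ in identical positions, the expansion is literally the same formal expression with $A_j\mapsto B_j$, so this obstacle is routine rather than deep; I would state it as a direct consequence of the block structure and the linearity of trace.
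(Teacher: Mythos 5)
Your proposal is correct and follows essentially the same route as the paper: construct the block matrices of Theorem~\ref{simul unitary}, apply Theorem~\ref{Specht} to them, observe that each block of any word $W(A,A^{\ast})$ expands formally (with identical coefficients for $A$ and $B$) into words $w(A_{1},A_{1}^{\ast},\ldots,A_{m},A_{m}^{\ast})$, and handle the easy necessity direction by directly conjugating words by $U$. Your remark that the "surjectivity onto all words $w$" issue is moot is exactly right, and matches the paper's closing of the cycle (d)~$\Rightarrow$~(a) via the same trace-invariance computation.
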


\begin{proof}
Let $k=m+2$ and consider a matrix $A$ of the form (\ref{Ma}) that is
constructed by placing the matrices $A_{1},\ldots,A_{m}$ sequentially in the
$m$ blocks of its third block superdiagonal, and placing zero blocks in all of
its other blocks above the third block superdiagonal. Construct a matrix $B$
of the form (\ref{Mb}) in the same way using the matrices $B_{1},\ldots,B_{m}$.

Consider the following assertions:
\newline
(a)\ \
$\operatorname{tr}w(A_{1},A_{1}^{\ast},\ldots,A_{m},A_{m}^{\ast
})\ =\
\operatorname{tr}w(B_{1},B_{1}^{\ast},\ldots,B_{m},B_{m}^{\ast})$
\ \ \ for\ \ \ all\ \ \  words
\\ $w(s_{1},t_{1},\ldots,s_{m},t_{m})$ in
$2m$ noncommuting variables;
\newline
(b) $\operatorname{tr}W(A,A^{\ast})=\operatorname{tr}W(B,B^{\ast})$ for every
word $W(s,t)$ in two noncommuting variables;
\newline
(c) $A$ is unitarily similar to $B$;
\newline
(d) The pairs $(A_{1},B_{1}),\ldots,(A_{m},B_{m})$ are simultaneously
unitarily similar.

It suffices to show that these four assertions are equivalent.
\newline
(a) $\Rightarrow$ (b) Each block of any word $W(A,A^{\ast})$ is a linear
combination of the identity matrix (we may think of it as an empty word) and
words of the form $w(A_{1},A_{1}^{\ast},\ldots,A_{m},A_{m}^{\ast})$.
\newline
(b) $\Rightarrow$ (c) Theorem \ref{Specht}.
\newline
(c) $\Rightarrow$ (d) Theorem \ref{simul unitary}.
\newline
(d) $\Rightarrow$ (a) The same computation that verified the identities
(\ref{trace}).\hfill
\end{proof}

\medskip Simultaneous unitary similarity of given pairs $(A_{1},B_{1}%
),\ldots,(A_{m},B_{m})$ of $n$-by-$n$ complex matrices is equivalent to
unitary similarity of two particular block matrices; Theorem \ref{Specht}
ensures that this latter unitary similarity can be confirmed or refuted with
finitely many computations. Thus, there is a finite algorithm to determine
whether a finite number of pairs of matrices are simultaneously unitarily similar.

Likewise, the trace criterion in Corollary \ref{genSpecht} requires only
finitely many computations: it requires verification only of enough identities
of the form $\operatorname{tr}w(A_{1},A_{1}^{\ast},\ldots,A_{m},A_{m}^{\ast
})=$ $\operatorname{tr}w(B_{1},B_{1}^{\ast},\ldots,B_{m},B_{m}^{\ast})$ to
ensure satisfaction of the finite number of identities of the form
$\operatorname{tr}W(A,A^{\ast})=\operatorname{tr}W(B,B^{\ast})$ required by
the Pappacena upper bound in Theorem \ref{Specht}.

\section{Unitary congruence of a pair of matrices}

Before we consider the role of Lemma \ref{basic} in assessing simultaneous
unitary congruence and other simultaneous unitary equivalences, we need to
consider the simplest case of unitary congruence of a single pair of matrices.
If $U$ is unitary and $A=UBU^{T}$, a calculation reveals that
\begin{equation}
AA^{\ast}=U(BB^{\ast})U^{\ast}\text{, }A\bar{A}=U(B\bar{B})U^{\ast}\text{, and
}A^{T}\bar{A}=U(B^{T}\bar{B})U^{\ast}\text{,} \label{basic three}%
\end{equation}
so three pairs of matrices related to $A$ and $B$ are simultaneously unitarily
similar. Fortunately, this necessary condition is also sufficient:

\begin{theorem}
\label{HH} Complex $n$-by-$n$ matrices $A$ and $B$ are unitarily congruent if
and only if the three pairs $(AA^{\ast},BB^{\ast})$, $(A\bar{A},B\bar{B})$,
and $(A^{T}\bar{A},B^{T}\bar{B})$ are simultaneously unitarily similar. If
either $A$ or $B$ is nonsingular, the third pair may be omitted.
\end{theorem}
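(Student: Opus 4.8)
The ``only if'' direction is exactly the three identities displayed in (\ref{basic three}). For the converse, the plan is to reduce to a normalized situation and then treat the nonsingular and singular cases separately. Suppose the three pairs are simultaneously unitarily similar via a unitary $U$, so $AA^{\ast}=U(BB^{\ast})U^{\ast}$, $A\bar{A}=U(B\bar{B})U^{\ast}$, and $A^{T}\bar{A}=U(B^{T}\bar{B})U^{\ast}$. First I would replace $B$ by $B':=UBU^{T}$, which is unitarily congruent to $B$: using $U^{T}\bar{U}=\bar{U}U^{T}=I$ one computes $B'B'^{\ast}=U(BB^{\ast})U^{\ast}$, $B'\overline{B'}=U(B\bar{B})U^{\ast}$, and $B'^{T}\overline{B'}=U(B^{T}\bar{B})U^{\ast}$, so after this replacement (and renaming) we may assume $AA^{\ast}=BB^{\ast}$, $A\bar{A}=B\bar{B}$, and $A^{T}\bar{A}=B^{T}\bar{B}$; the task is then to produce a unitary $V$ with $A=VBV^{T}$. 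Taking complex conjugates in the third identity and using $\overline{A^{\ast}A}=A^{T}\bar{A}$ gives $A^{\ast}A=B^{\ast}B$, so $A$ and $B$ now have the same column space and the same null space.

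Next, the nonsingular case. Suppose $A$ --- equivalently $B$, since $AA^{\ast}=BB^{\ast}$ --- is nonsingular. I would use the left polar decompositions $A=PW_{A}$ and $B=PW_{B}$, where $P=(AA^{\ast})^{1/2}$ is positive definite (the same for both, because $AA^{\ast}=BB^{\ast}$) and $W_{A},W_{B}$ are unitary. It then suffices to find a unitary $V$ commuting with $P$ and satisfying $W_{A}=VW_{B}V^{T}$, since then $A=PW_{A}=V(PW_{B})V^{T}=VBV^{T}$. Substituting the polar decompositions into $A\bar{A}=B\bar{B}$ and cancelling $P$ on the left yields the single constraint $W_{A}\bar{P}\,\overline{W_{A}}=W_{B}\bar{P}\,\overline{W_{B}}$. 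Decomposing $\mathbb{C}^{n}$ into the eigenspaces of $P$ (equivalently, grouping coordinates by the distinct singular values of $A$), this constraint decouples block by block into a unitary-congruence problem for unitary matrices that I expect to settle with the spectral theorem and, on each block, a short direct computation, producing the required $V$. In this case $A^{T}\bar{A}$ is already determined by $AA^{\ast}$ and $A\bar{A}$ --- having recovered $W_{A}$ (up to the permissible ambiguity) from $P$ and $A\bar{A}$, one has $A^{T}\bar{A}=W_{A}^{T}\,\overline{AA^{\ast}}\,\overline{W_{A}}$ --- which accounts for the last sentence of the theorem.

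Finally, the singular case, say $\operatorname{rank}A=\operatorname{rank}B=r<n$. Since $A$ and $B$ have the same column space $\mathcal{R}$, a simultaneous unitary congruence by a unitary carrying $\mathcal{R}$ onto $\operatorname{span}(e_{1},\dots,e_{r})$ --- which, by the same computation as in the normalization step, preserves all three identities --- brings $A$ and $B$ to forms whose nonzero entries lie in the first $r$ rows. Using the common null space together with induction on $n$, I would peel off the singular directions and reduce to the nonsingular case above. I expect this step to be the main obstacle: the null space of $A$ need not be contained in or complementary to its column space, and (for instance) the nilpotent Jordan block $J_{2}(0)$ is not unitarily congruent to a direct sum of a nonsingular matrix with a zero matrix, so one cannot simply split off the kernel. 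Controlling this entanglement of the singular part with the unitary transformation is precisely what the pair $(A^{T}\bar{A},B^{T}\bar{B})$ --- equivalently, the condition $A^{\ast}A=B^{\ast}B$ --- is for, and it is what makes that pair indispensable when both $A$ and $B$ are singular. (Alternatively, the theorem can be deduced from the known canonical form for complex matrices under unitary congruence, by verifying that two matrices share the same canonical form exactly when the three displayed pairs are simultaneously unitarily similar; the route sketched above is more self-contained.)
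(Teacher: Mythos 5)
The paper does not actually prove this theorem: its proof is the single line ``See \cite{HornHong}'', so your sketch has to stand on its own against the full argument of that reference, and it falls short in two places. The preliminary steps are fine: the ``only if'' direction is exactly (\ref{basic three}), the normalization $B\mapsto UBU^{T}$ correctly reduces the converse to the case $AA^{\ast}=BB^{\ast}$, $A\bar{A}=B\bar{B}$, $A^{T}\bar{A}=B^{T}\bar{B}$, and conjugating the third identity does give $A^{\ast}A=B^{\ast}B$. But in the nonsingular case the heart of the matter is asserted rather than proved. After writing $A=PW_{A}$, $B=PW_{B}$ you must produce a unitary $V$ that commutes with $P$ and satisfies $W_{A}=VW_{B}V^{T}$, given only $W_{A}\bar{P}\,\overline{W_{A}}=W_{B}\bar{P}\,\overline{W_{B}}$. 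The claimed decoupling over the eigenspaces of $P$ is not justified: the constraint involves $\bar{P}$, whose spectral projections are the conjugates of those of $P$; the summands $W_{A}\overline{E_{\lambda}}\,\overline{W_{A}}$ (with $E_{\lambda}$ an eigenprojection of $P$) are neither Hermitian nor idempotent, so the two sides are not spectral resolutions whose $\lambda$-terms can be matched; nothing forces $W_{A}$ or $W_{B}$ to respect that decomposition; and a $V$ built blockwise to commute with $P$ enters the congruence through $V^{T}$, which commutes with $\bar{P}$, not with $P$. Even in the cleanest subcase $P=I$, what remains --- that unitary $W_{A}$, $W_{B}$ with $W_{A}\overline{W_{A}}=W_{B}\overline{W_{B}}$ are unitarily congruent --- is itself a canonical-form theorem for unitary congruence of unitary matrices, not a ``short direct computation''.

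The singular case is simply missing. You correctly identify why it is delicate (the kernel cannot be split off, as $J_{2}(0)$ shows, and the column space and null space can be entangled), but the proposed remedy --- peel off the singular directions and induct on $n$ --- is a statement of intent, not an argument: no inductive step is formulated, and it is exactly here that the third pair $(A^{T}\bar{A},B^{T}\bar{B})$ must be exploited and that \cite{HornHong} does most of its work. The parenthetical alternative (deduce the theorem from the canonical form of a matrix under unitary congruence) would work, but it invokes machinery at least as heavy as the theorem itself and is likewise not carried out. As it stands, the proposal establishes only the easy direction and a correct normalization; both the nonsingular and the singular halves of the converse are genuine gaps.
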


\begin{proof}
See \cite{HornHong}.\hfill
\end{proof}

\medskip The preceding theorem and Theorem \ref{simul unitary} imply the
following criterion for unitary congruence.

\begin{theorem}
\label{triple} Complex $n$-by-$n$ matrices $A$ and $B$ are unitarily congruent
if and only if the $4n$-by-$4n$ matrices%
\begin{equation}
K_{A}=\left[
\begin{array}
[c]{cccc}%
0 & I_{n} & AA^{\ast} & A\bar{A}\\
& 0 & I_{n} & A^{T}\bar{A}\\
&  & 0 & I_{n}\\
&  &  & 0
\end{array}
\right]  \text{ and }K_{B}=\left[
\begin{array}
[c]{cccc}%
0 & I_{n} & BB^{\ast} & B\bar{B}\\
& 0 & I_{n} & B^{T}\bar{B}\\
&  & 0 & I_{n}\\
&  &  & 0
\end{array}
\right]  \label{basic two}%
\end{equation}
are unitarily similar. If $A$ or $B$ is
nonsingular, they are unitarily
congruent if and only if%
\begin{equation}
K_{A}^{\prime}=\left[
\begin{array}
[c]{cccc}%
0 & I_{n} & AA^{\ast} & A\bar{A}\\
& 0 & I_{n} & 0\\
&  & 0 & I_{n}\\
&  &  & 0
\end{array}
\right]  \text{ and }K_{B}^{\prime}=\left[
\begin{array}
[c]{cccc}%
0 & I_{n} & BB^{\ast} & B\bar{B}\\
& 0 & I_{n} & 0\\
&  & 0 & I_{n}\\
&  &  & 0
\end{array}
\right]  \label{basic two'}%
\end{equation}
are unitarily similar.
\end{theorem}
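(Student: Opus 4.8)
The plan is to read off $K_A$ and $K_B$ (and $K_A^{\prime}$, $K_B^{\prime}$) as instances of the block matrices in (\ref{Ma}) and (\ref{Mb}) with $k=4$, and then simply compose Theorem \ref{HH} with Theorem \ref{simul unitary}.

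First I would note that when $k=4$ the matrix $A$ in (\ref{Ma}) has $0$ blocks on its main diagonal, $I_{n}$ blocks on its first block superdiagonal, and exactly three remaining ``free'' blocks, in positions $(1,3)$, $(1,4)$, and $(2,4)$. Setting $A_{13}=AA^{\ast}$, $A_{14}=A\bar{A}$, and $A_{24}=A^{T}\bar{A}$ makes (\ref{Ma}) equal to $K_{A}$, and the corresponding choice for $B$ makes (\ref{Mb}) equal to $K_{B}$. Applying Theorem \ref{simul unitary} with $m=3$ and this value of $k$ then yields directly: $K_{A}$ is unitarily similar to $K_{B}$ if and only if the three pairs $(AA^{\ast},BB^{\ast})$, $(A\bar{A},B\bar{B})$, $(A^{T}\bar{A},B^{T}\bar{B})$ are simultaneously unitarily similar.

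Next I would invoke Theorem \ref{HH}, which asserts that $A$ and $B$ are unitarily congruent precisely when those three pairs are simultaneously unitarily similar. Chaining the two equivalences gives the first assertion of the theorem. For the nonsingular case, Theorem \ref{HH} permits dropping the third pair, so I would instead place $AA^{\ast}$ and $A\bar{A}$ in positions $(1,3)$ and $(1,4)$ and a zero block in position $(2,4)$, obtaining $K_{A}^{\prime}$ and $K_{B}^{\prime}$. Since the pair $(0,0)$ is simultaneously unitarily similar via every unitary matrix, Theorem \ref{simul unitary} now shows that $K_{A}^{\prime}$ is unitarily similar to $K_{B}^{\prime}$ if and only if $(AA^{\ast},BB^{\ast})$ and $(A\bar{A},B\bar{B})$ are simultaneously unitarily similar, which by the nonsingular case of Theorem \ref{HH} is equivalent to the unitary congruence of $A$ and $B$.

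The argument is essentially bookkeeping, so I do not expect a genuine obstacle; the only points that need care are verifying that $k=4$ supplies exactly the three free block positions needed to host the three pairs (so that Theorem \ref{simul unitary} applies without modification), and checking that inserting a zero block in the nonsingular case neither creates nor destroys a simultaneous unitary similarity.
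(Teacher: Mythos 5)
Your proposal is correct and is exactly the paper's argument: the paper states that Theorem \ref{triple} follows from Theorem \ref{HH} combined with Theorem \ref{simul unitary}, which is precisely your $k=4$ instantiation of (\ref{Ma}) and (\ref{Mb}) with the three pairs (or two pairs plus a zero block in the nonsingular case) placed in positions $(1,3)$, $(1,4)$, $(2,4)$. No gaps; your bookkeeping about the free blocks and the harmless $(0,0)$ pair is exactly what the paper's implicit proof relies on.
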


When one applies the criterion in Theorem \ref{triple}, the bound in Theorem
\ref{Specht} ensures that it suffices to verify identities of the form
\begin{equation}
\operatorname{tr}W(K_{A},K_{A}^{\ast})=\operatorname{tr}W(K_{B},K_{B}^{\ast})
\label{Ktrace}%
\end{equation}
for all words $W(s,t)$ of length at most%
\begin{equation}
4n\sqrt{\frac{32n^{2}}{4n-1}+\frac{1}{4}}+2n-2\text{.} \label{sqrt}%
\end{equation}
The matrices (\ref{basic two}) and (\ref{basic two'}) are nilpotent of index
four, so in verifying the trace identities (\ref{Ktrace}) we need to consider
only words, all of whose factor exponents are three or less.

For $n=2$, the bound (\ref{sqrt}) says that it suffices to consider all words
of length at most $37$, so a great many words must be considered, even in the
smallest case. For $n=3$ the upper bound on the length is $66$; for $n=4$ it
is $116$. In contrast, when employing Specht's criterion to test a pair of
2-by-2 matrices for unitary similarity, it suffices to check traces of only
three words of length at most two; to test a pair of 3-by-3 matrices it
suffices to check traces of only seven words of length at most six. It is not
known whether the special form of the matrices in (\ref{basic two}) and
(\ref{basic two'}), some special features in low dimensional cases, or some
clever insight would permit the upper bound (\ref{sqrt}) to be reduced significantly.

\section{Simultaneous unitary equivalences}

We now make explicit the solution of the General Problem that is implicit in
Lemma \ref{basic}.

\begin{theorem}
Let $m_{1}$, $m_{2}$, $m_{3}$, and $m_{4}$ be given nonnegative integers. Let%
\[%
\begin{array}
[c]{ccc}%
(A_{1}^{(1)},B_{1}^{(1)}),\ldots,(A_{m_{1}}^{(1)},B_{m_{1}}^{(1)}), & \quad &
(A_{1}^{(2)},B_{1}^{(2)}),\ldots,(A_{m_{2}}^{(2)},B_{m_{2}}^{(2)}),\\
(A_{1}^{(3)},B_{1}^{(3)}),\allowbreak\ldots,\allowbreak(A_{m_{3}}%
^{(3)},B_{m_{3}}^{(3)}), & \quad & (A_{1}^{(4)},B_{1}^{(4)}),\ldots,(A_{m_{4}%
}^{(4)},B_{m_{4}}^{(4)})
\end{array}
\]
be given pairs of $n$-by-$n$ complex matrices. Choose $k$ large enough so that
the matrix $A$ in (\ref{Ma}) has enough blocks above the second block
superdiagonal to accommodate the following construction:
\newline
(1) Place the matrices $A_{1}^{(1)},\ldots,A_{m_{1}}^{(1)}$ (in any desired
order) in ($i,j$) blocks of $A$ such that $i$ is odd, $j$ is even, and
$j-i\geq2$; place the matrices $B_{1}^{(1)},\ldots,B_{m_{1}}^{(1)}$ in
corresponding positions in $B$.
\newline
(2) Place $A_{1}^{(2)},\ldots,A_{m_{2}}^{(2)}$ (in any desired order) in
($i,j$) blocks of $A$ such that $i$ and $j$ are both odd and $j-i\geq2$; place
$B_{1}^{(2)},\ldots,B_{m_{2}}^{(2)}$ in corresponding positions in $B$.
\newline
(3) Place $A_{1}^{(3)},\ldots,A_{m_{3}}^{(3)}$ (in any desired order) in
($i,j$) blocks of $A$ such that $i$ and $j$ are both even and $j-i\geq2$;
place $B_{1}^{(3)},\ldots,B_{m_{3}}^{(3)}$ in corresponding positions in $B$.
\newline
(4) Place $A_{1}^{(4)},\ldots,A_{m_{4}}^{(4)}$ (in any desired order) in
($i,j$) blocks of $A$ such that $i$ is even, $j$ is odd, and $j-i\geq2$; place
$B_{1}^{(4)},\ldots,B_{m4}^{(4)}$ in corresponding positions in $B$.
\newline
(5) Place zero matrices in any unfilled blocks of $A$ and $B$.
\newline
\indent
\noindent Then $A$ is unitarily congruent to $B$ if and only if there is a
unitary matrix $U$ such that
\newline
\indent
($1^{\prime}$) $A_{i}^{(1)}=UB_{i}^{(1)}U^{\ast}$ for all $i=1,\ldots,m_{1}$;
\newline
\indent
($2^{\prime}$) $A_{i}^{(2)}=UB_{i}^{(2)}U^{T}$ for all $i=1,\ldots,m_{2}$;
\newline
\indent
($3^{\prime}$) $A_{i}^{(3)}=\bar{U}B_{i}^{(3)}U^{\ast}$ for all $i=1,\ldots
,m_{3}$; and
\newline
\indent
($4^{\prime}$) $A_{i}^{(4)}=\bar{U}B_{i}^{(4)}U^{T}$ for all $i=1,\ldots
,m_{4}$.
\end{theorem}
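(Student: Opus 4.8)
The plan is to prove this the same way one would prove Theorem~\ref{simul unitary}, but tracking the four distinct types of unitary equivalence that Lemma~\ref{basic}(c)--(d) produces. The essential point is that unitary congruence of $A$ and $B$ means $A\bar W = WB$ for some unitary $W$, and Lemma~\ref{basic}(d) tells us exactly what such a $W$ must look like: block diagonal with $W_{ii}=U$ for $i$ odd and $W_{ii}=\bar U$ for $i$ even, where $U:=W_{11}$ is unitary. So the forward direction is essentially immediate.

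First I would establish the forward implication. Suppose $A$ is unitarily congruent to $B$; choose unitary $W$ with $A\bar W=WB$. By Lemma~\ref{basic}(d), $W=U\oplus\bar U\oplus U\oplus\cdots$ with $U=W_{11}$ unitary, and the four relations (d1)--(d4) hold for every block. Now I simply read off what (d1)--(d4) say in the four placement regions of the construction. A block in position $(i,j)$ with $i$ odd, $j$ even, and $j-i\ge2$ holds some $A^{(1)}_\ell$ opposite $B^{(1)}_\ell$; relation (d1) gives $A^{(1)}_\ell=UB^{(1)}_\ell U^{\ast}$, which is ($1'$). Positions with $i,j$ both odd hold $A^{(2)}_\ell$ opposite $B^{(2)}_\ell$, and (d2) gives $A^{(2)}_\ell=UB^{(2)}_\ell U^{T}$, which is ($2'$). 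The even--even case gives ($3'$) from (d3), and the even--odd case gives ($4'$) from (d4). (The requirement $j-i\ge2$ just keeps us off the first superdiagonal, whose blocks are the fixed $I_n$'s; the constraint on which parities of $(i,j)$ can occur with $j-i\ge 2$ for a given $k$ is what ``$k$ large enough'' guarantees, and one should remark that $k$ can always be taken large enough, e.g. by using only the first block row or the third block superdiagonal as in Theorem~\ref{simul unitary}.)

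For the converse, suppose there is a unitary $U$ satisfying ($1'$)--($4'$). I would exhibit the congruence explicitly: set $W:=U\oplus\bar U\oplus U\oplus\bar U\oplus\cdots$ (with $k$ summands, alternating starting from $U$). This $W$ is unitary. It remains to check $A\bar W = WB$ block by block. The $(i,i+1)$ blocks: both sides reduce to $W_{ii}\cdot\overline{W_{i+1,i+1}}$ acting on $I_n$ versus $W_{ii}I_n$ — wait, more carefully, $(A\bar W)_{i,i+1}=A_{i,i+1}\overline{W_{i+1,i+1}}=\overline{W_{i+1,i+1}}$ and $(WB)_{i,i+1}=W_{ii}B_{i,i+1}=W_{ii}$, and since consecutive diagonal blocks are $U$ and $\bar U$ in some order, these agree. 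For a general block position $(i,j)$ with $j-i\ge 2$: $(A\bar W)_{ij}=A_{ij}\overline{W_{jj}}$ and $(WB)_{ij}=W_{ii}B_{ij}$, so I need $A_{ij}\overline{W_{jj}}=W_{ii}B_{ij}$, i.e. $A_{ij}=W_{ii}B_{ij}\overline{W_{jj}}{}^{-1}=W_{ii}B_{ij}(W_{jj})^{T}$ (using unitarity, $\overline{W_{jj}}{}^{-1}=(W_{jj}^{\ast})^{\overline{\phantom{x}}}=W_{jj}^{T}$). Plugging in the four parity cases for $(W_{ii},W_{jj})\in\{U,\bar U\}^2$ and comparing with ($1'$)--($4'$) (and the trivial identity $0=W_{ii}0(W_{jj})^T$ for the zero blocks) verifies $A\bar W=WB$, so $A$ and $B$ are unitarily congruent.

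The main obstacle is not any single calculation — each of the four cases is a one-line index check — but rather bookkeeping: making sure the parity conditions in (1)--(4) of the construction line up precisely with the parity hypotheses in (d1)--(d4) of Lemma~\ref{basic}, and confirming that for suitably large $k$ every required $(A^{(t)}_\ell,B^{(t)}_\ell)$ can be assigned to an admissible block position of the correct parity type. I would dispatch this by noting that the blocks strictly above the second superdiagonal of a $k\times k$ block matrix include, for $k$ large, arbitrarily many positions of each of the four parity types, so the construction is always realizable; the rest is a transcription of Lemma~\ref{basic}. I would present the proof tersely, citing Lemma~\ref{basic}(d) for the forward direction and doing the explicit block computation with $W=U\oplus\bar U\oplus\cdots$ for the converse.
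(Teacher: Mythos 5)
Your proposal is correct and is exactly the argument the paper intends: the theorem is stated as ``implicit in Lemma~\ref{basic},'' with the forward direction read off from Lemma~\ref{basic}(d) (parts (d1)--(d4) matching the four parity classes of block positions) and the converse obtained by exhibiting the block-diagonal unitary $W=U\oplus\bar{U}\oplus U\oplus\cdots$ and checking $A\bar{W}=WB$ blockwise, just as you do. Your parity bookkeeping and the identity $(\overline{W_{jj}})^{-1}=W_{jj}^{T}$ for unitary blocks are both right, so nothing is missing.
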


Suppose that four sets of pairs of $n$-by-$n$ complex matrices are given (some
of these sets may be empty), and it is required to determine if the
simultaneous unitary equivalences stated in the General Problem are valid. Our
algorithm proceeds as follows:

\begin{algorithm}
Construct the block matrices $A$ and $B$ according to the prescription in the
preceding theorem. Then construct%
\[
K_{A}=\left[
\begin{array}
[c]{cccc}%
0 & I_n & AA^{\ast} & A\bar{A}\\
& 0 & I_n & A^{T}\bar{A}\\
&  & 0 & I_n\\
&  &  & 0
\end{array}
\right]  \text{ and }K_{B}=\left[
\begin{array}
[c]{cccc}%
0 & I_{n} & BB^{\ast} & B\bar{B}\\
& 0 & I_{n} & B^{T}\bar{B}\\
&  & 0 & I_{n}\\
&  &  & 0
\end{array}
\right]  \text{.}%
\]
The four given sets of pairs of matrices satisfy the required simultaneous
unitary equivalences if and only if $K_{A}$ and $K_{B}$ are unitarily similar.
That unitary similarity can be confirmed or refuted with finitely many
computations by using Theorem \ref{Specht}. In those computations, only words
with factor exponents at most $3$ need to be considered.
\end{algorithm}

\section{Some comments on previous work}

A criterion for simultaneous unitary similarity that reduces the problem to
one of verifying unitary similarity of a single pair of block matrices is in
Section 2.3 of the 1998 paper \cite{VVS}. We employ different block matrices
in Lemma \ref{basic} because we want it to embrace both simultaneous unitary
congruence and simultaneous unitary similarity.

The problem of simultaneous unitary similarity was also studied in the 2003
paper \cite{AI}. The criterion developed there is formally equivalent to the
finite version of our Corollary \ref{genSpecht}.

The recent paper \cite{AI2} makes the important observation that the criterion
in \cite{HornHong} can be combined with a test for simultaneous unitary
similarity to give a criterion for unitary congruence that can be verified
with finitely many computations; the authors use the test in \cite{AI}.
However, because of an extra condition imposed in \cite{AI} on matrix families
whose simultaneous unitary similarity is to be tested, the criterion in
\cite{AI2} for determining unitary congruence of two general matrices requires
assessing simultaneous unitary similarity of the \emph{four} pairs $(AA^{\ast
},BB^{\ast})$, $(A\bar{A},B\bar{B})$, $(A^{T}A^{\ast},B^{T}B^{\ast})$, and
$(A^{T}\bar{A},B^{T}\bar{B})$ rather than the \emph{three} pairs in
(\ref{basic three}).

\end{document}